\newtheorem{theorem}{Theorem}[section]
\newtheorem{lemma}[theorem]{Lemma}
\theoremstyle{remark}
\newtheorem{remark}[theorem]{Remark}
\title{Alternating links have at most polynomially many Seifert surfaces of fixed genus}
\author{Joel Hass, Abigail Thompson, Anastasiia Tsvietkova}
\date{}
\subjclass[2010]{}
\begin{document}

 \footnotesize
 \begin{abstract}
 Let $L$ be a non-split prime alternating link with $n>0$ crossings.
 We show that for each fixed $g$, the number of genus-$g$ Seifert surfaces for
 $L$ is bounded by an explicitly given polynomial in $n$. The result also holds for all spanning surfaces of fixed Euler characteristic.
 Previously known bounds were exponential.
 \end{abstract}

\maketitle
\normalsize

\section{Introduction}

A \textit{Seifert surface} for a link in $S^3$ is a connected orientable surface, embedded in $S^3-L$, whose boundary is isotopic to the link.
If the orientability condition is omitted, the surface is called a \textit{spanning surface}.
Let $L$ be a non-split prime alternating link with an $n$-crossing diagram, where $n>0$. In this paper we give an upper
bound on the number of isotopy classes of spanning surfaces of $L$ that have a fixed Euler characteristic.
The bound is given by an explicit polynomial in $n$. Our methods apply to  Seifert surfaces, spanning surfaces and to more general essential surfaces
with non-meridional boundary.  However in this paper we focus our attention on Seifert surfaces and state our results
mainly for that class of surface. We show that the number of genus-$g$ Seifert surfaces in the complement of an $n$-crossing alternating diagram is at most $ (4n)^{   64g^2 -48g }$.

In general, a knot complement can contain many, and in some cases infinitely many, non-isotopic Seifert surfaces of a given genus (see, for example, \cite{Eisner}). But
in a hyperbolic manifold the number of spanning surfaces is always finite.
This can be seen by homotoping each surface to a least area representative and applying
 the Gauss-Bonnet Theorem and Schoen's curvature estimates \cite{Hass, Schoen}. Alternately the surfaces can
 be homotoped to pleated surfaces as in \cite{Thurston}.
 This type of argument applies also to $\pi_1$-injective immersions, but
 is not constructive and gives no explicit bound on the number of surfaces of a given genus.
Normal surface theory can also be used to bound the number of spanning surfaces of genus $g$.
Each surface can be isotoped to be normal, and can then be expressed as a sum  of
finitely many fundamental normal surfaces. However this process leads to an exponential bound on the number
 of spanning surfaces of genus $g$
as a function of $n$.  This is due to the exponential growth of the number of fundamental surfaces of a given genus, and even
of the number of vertex fundamental surfaces, as a function of the number of tetrahedra $t$ \cite{HassLagariasPippenger}.  An additional difficulty is that an incompressible surface may not be fundamental, so that
 one must also count combinations of lower genus fundamental surfaces that combine to form a given genus \cite{Haken:61}.

 The surfaces we consider are  embedded and incompressible, but not necessarily disjoint. The number of disjoint incompressible surfaces in a manifold
 is easier to bound. This was first observed by Kneser \cite{Kneser:29} for  closed incompressible surfaces.
 Kneser showed that the number of such surfaces is bounded by a linear function of the number of tetrahedra $t$ required to triangulate the manifold.  For a link complement,
 $t$ is a linear multiple of the number of crossings in the link diagram. Hence the number of disjoint spanning surfaces realizable for a link $L$
 grows linearly with $n$  \cite{Hempel}. Kneser's arguments also apply to surfaces with boundary in link complements.

If we fix a link complement, our results additionally yield exponential upper bound in terms of genus (rather than crossing number). This can be compared with the results of Masters \cite{Masters} and Kahn-Markovic \cite{KahnMarkovic} for essential immersed closed surfaces in a closed manifold, where the manifold is fixed and it is shown that the number of surfaces grows exponentially with the genus.

A related  problem for closed surfaces was studied by the authors in \cite{HTT}, where it was proved that the number of closed incompressible genus-$g$ surfaces in a prime alternating link complement is bounded by $C_g n^{40g^2}$, where $C_g$ is an explicit constant depending only on the genus $g$, and $n$ is the number of crossings. The proof for Seifert surfaces and spanning surfaces presented here needs to consider cases that cause difficulties not encountered with closed surfaces. In \cite{HTT} a surface is put in standard position with respect to the projection plane and the link diagram, and then decomposed into disks bounded by polygons and lying above or below the plane. By summing the contributions  to the Euler characteristic of the surface of each region, a bound on the number  and complexity of the region is obtained. Each region makes a negative contribution to the  Euler Characteristic, bounding their number, and the possible ways for polygons to appear on a link diagram can then be analyzed, providing an upper bound for the surface count. The standard position and Euler characteristic arguments can be extended to Seifert surfaces, but the resulting regions for surfaces with boundary  include cases that contribute zero to the Euler characteristic computation.  This leads to an exponential explosion in the number of possible intersection configurations relative to the number of crossings. However many of these configurations give rise to isotopic surfaces, and we
can  show that the number of surfaces up to isotopy  still grows polynomially with the number of crossings. This is carried out in Sections~4 and 5. \color{black}

\section{Standard position for Seifert surfaces}

A standard position for surfaces in an alternating knot complement was introduced by Menasco for closed surfaces and for surfaces with meridianal boundary \cite{Menasco1984}, and extended by Menasco and Thistlethwaite to general surfaces with  boundary \cite{Menasco1992}.
We briefly review these techniques, with some minor modifications to the arguments.

 A reduced alternating diagram $D$ of a prime alternating link $L$ can be placed in a projection plane $Q$ except for two small arcs near each crossing,
 one of which drops below $Q$, and one of which rises above it.
 $L$ then lies on a union of two overlapping 2-spheres in $S^3$, $S_+$ and $S_-$, which agree with $Q$
 except along small balls around each crossing, called \textit{bubbles}.  The spheres $S_+$ and $S_-$ go over the top and bottom hemispheres of each bubble, respectively.
We denote by $B_+$ the ball  in $S^3$ lying above the projection plane and bounded by $S_+$,  and by $B_-$  the ball  lying below the projection plane and bounded by $S_-$.

Suppose $F$ is a spanning surface for $L$. It is shown in Proposition~2.1 of  \cite{Menasco1992} that $F$ can be isotoped rel boundary so that it intersects $Q$ transversally except in two situations:
1) $F$ meets a bubble  in a saddle near a crossing, as in Figure~\ref{saddle};
2) At finitely many points, $F$ twists around a strand of $L$ as in Figure~\ref{Twist}.

$F$ is then said to be in the \textit{standard position}. In (2), we call the arc of intersection of $F\cap (Q-D)$, together with its endpoints on $D$, an \textit{interior arc}. An interior arc can run along $Q$ to connect two points on $D$, or between a point on $D$ and a saddle. An arc of $F\cap Q$ that coincides with an arc of $D$ and lies between two interior arcs is called a \textit{boundary arc}.

\begin{figure} [htbp]
\centering
\begin{subfigure}[b]{0.45 \textwidth}
\centering
\includegraphics[scale=0.29]{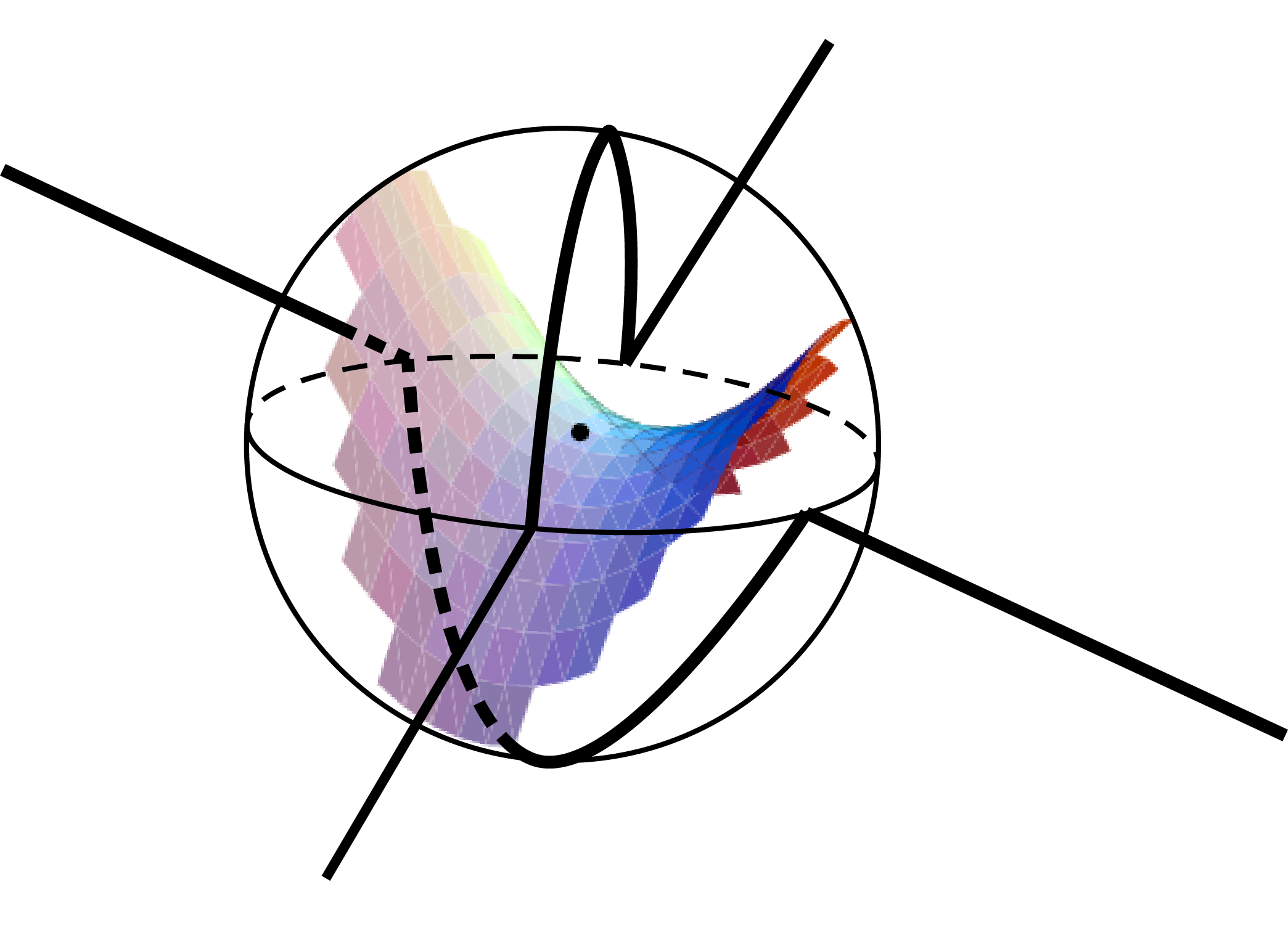}
\caption{$F$ is tangent to the projection plane at a saddle point contained in a bubble.}
\label{saddle}
\end{subfigure}
\begin{subfigure}[b]{0.45 \textwidth}
\centering
\includegraphics[scale=1.1]{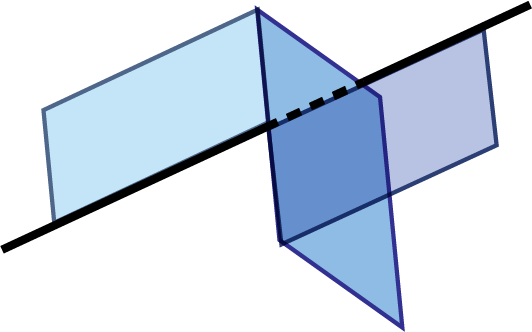}
\caption{An interior arc of $F \cap Q$ ends at a point where $F$ twists from being above  to below $Q$.}
\label{Twist}
\end{subfigure}
\caption{}
\end{figure}

$F$ is divided into regions in $B_+$ and $B_-$ by closed curves in $F  \cap S_+$ or  $F \cap  S_-$. Parts of these curves lie in the boundary of $F$ (\textit{i.e.} run along  the link). We assign to each closed curve  $C$ in $F  \cap S_+$ or  $F \cap  S_-$  a word in the letters $B$ and $S$, defined up to cyclic order, as follows. Orient  $C$ and  starting from an arbitrary non-crossing point, follow $C$ until returning to that point. For every saddle   passed on the way, add  an $S$ to the word and for every point that passes between interior and boundary arcs, add a $B$. Figure~\ref{Figure1.2}  gives an example of a link, and a curve of  $F  \cap S_+$ that gives the word $BBSSS$. The link is depicted in black, the curves of $F  \cap S_+$ with dashes, and the curves of $F  \cap S_-$ with dots.

Define the complexity of a surface $F$ in standard position to be a pair $(s,c)$ , where
$s$ is the sum of the number of $S$'s associated to all curves in $F \cap S_+$ and $F \cap S_-$, and $c$ is the number of such curves. If $F$ minimizes this complexity in lexicographic order among all standard position
surfaces in its isotopy class, then $F$ is said to have {\em minimal complexity}.

We refer to a segment of the link diagram $D$ that travels between two successive crossings of $L$ and lies in the projection plane $Q$ as an \textit{edge} of $D$, and use \textit{arc} to refer to subcurves of $F  \cap S_+$ and  $F \cap  S_-$ running between $S$'s and $B$'s.

The following lemma summarizes the properties of spanning surfaces in standard position that follow from the work of Menasco and Thistlethwaite \cite{Menasco1992}.

\begin{lemma}\label{properties}
Suppose a spanning surface $F$ has minimal complexity.
Then the curves of $F\cap S_+$ and  $F\cap S_-$ and the associated words in the letters $B$, $S$ have the following properties:
 \begin{enumerate}
\item \label{Disks}  The curves of $F \cap S_+$ and $F\cap S_-$ subdivide $F$ into disks, each disk lying either in $F \cap B_+$ or $F \cap B_-$.
\item \label{SaddleTwice}  No curve passes through the same bubble twice.
\item \label{EveryB} Every edge of the diagram $D$ meets an interior arc of $F \cap Q$.
 \item \label{ArcTwice}  No curve contains two interior arcs with endpoints on an edge $A$ and lying on the same side of $A$.
  \item \label{Balance} An equal number of curves of $F\cap S_+$ pass through each side of a saddle. The same holds for $F\cap S_-$.
 \item \label{SaddleArc} No curve passes through a saddle and then meets an edge of $D$ adjacent to the saddle.
 \item \label{NoEmpty} No word is empty.

 \item \label{ConsecutiveB's} Letters $B$ in a word appear in consecutive pairs.
 \item \label{NotJustS}
 There is no word consisting entirely of $S$'s.

 \item \label{Length4} Each word has length at least four.

 \end{enumerate}
 \end{lemma}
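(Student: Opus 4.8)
The plan is to derive all ten properties from a single principle --- minimality of the complexity pair $(s,c)$ --- within the Menasco--Thistlethwaite standard-position framework of \cite{Menasco1992}, checking as I go that the arguments survive the passage to spanning surfaces with non-meridional, possibly non-orientable, boundary. The common engine is uniform: assume a property fails, produce an explicit isotopy of $F$ rel $\partial F$ that either cancels a saddle (lowering $s$) or removes a closed curve of intersection (lowering $c$), and invoke minimality for a contradiction.

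First I would establish the disk decomposition (\ref{Disks}), which underlies everything else. Since $B_+$ and $B_-$ are $3$-balls and $F$ is incompressible and boundary-incompressible in the link exterior, an innermost-curve and innermost-arc argument on $S_+$ and $S_-$ shows that any component of $F\cap B_\pm$ that is not a disk would yield either a compression or a complexity-reducing isotopy; hence every piece is a disk. Properties (\ref{SaddleTwice}), (\ref{EveryB}), (\ref{ArcTwice}), (\ref{SaddleArc}) and (\ref{NoEmpty}) then follow from the same template: a curve passing twice through a bubble, an edge met by no interior arc, two interior arcs on the same side of an edge, a saddle-to-adjacent-edge run, or an empty word each exhibits a disk or bigon across which $F$ can be pushed to reduce $(s,c)$. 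Property (\ref{Balance}), the balance condition, is of a different flavor: near a saddle $F$ threads the bubble as a band, and the two families of curves of $F\cap S_+$ on opposite sides of the saddle must agree in number because each such curve both enters and leaves the bubble.

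The word-level properties (\ref{ConsecutiveB's})--(\ref{Length4}) are then read off combinatorially from the geometric ones. For (\ref{ConsecutiveB's}), each $B$ marks an endpoint of a boundary arc; since a boundary arc runs along $D$ and carries no saddle between its two endpoints, its two transitions appear as a consecutive pair $BB$, so all $B$'s occur in pairs. For (\ref{NotJustS}), a word consisting only of $S$'s corresponds to a closed curve making no transition between interior and boundary arcs, i.e. an interior curve disjoint from $\partial F$; this is excluded just as in the closed-surface analysis of \cite{Menasco1992}, or else the curve bounds a removable disk. Finally (\ref{Length4}) follows by combining (\ref{NoEmpty}), (\ref{ConsecutiveB's}) and (\ref{NotJustS}): the number of $B$'s is even, an all-$S$ word is forbidden, and minimality eliminates the few remaining short candidates ($SS$, $BB$, and $BBS$ with its cyclic rotations) via a disk isotopy, leaving length at least four.

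I expect the main obstacle to be the adaptation to spanning surfaces, rather than any individual move. Menasco and Thistlethwaite develop these arguments in the orientable and meridional settings, whereas here $F$ may be non-orientable and its boundary runs along $L$ without being meridional, so the twisting of Figure~\ref{Twist} produces boundary behavior absent for closed surfaces. The care lies in verifying that each complexity-reducing isotopy can be carried out rel $\partial F$ without destroying standard position, and in confirming that no move lowers $s$ only by raising $c$ (or conversely, in a way that breaks the lexicographic comparison). Once this bookkeeping is pinned down, each individual step reduces to a routine innermost-disk or bigon argument.
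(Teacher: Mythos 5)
Much of your plan matches what actually underlies the paper's proof: the paper disposes of items (\ref{Disks}), (\ref{SaddleTwice}), (\ref{ArcTwice}), (\ref{SaddleArc}) and (\ref{NotJustS}) by citing Propositions~2.2--2.3 of \cite{Menasco1992} and Lemma~2 of \cite{Menasco1984}, and those results are proved by exactly the kind of innermost-disk, complexity-reducing isotopies you describe. But your uniform template breaks down in two places, and both failures are substantive. First, item (\ref{EveryB}) is not a minimality statement and cannot be obtained from one: a complexity-reducing isotopy can only show that intersections with $Q$ can be \emph{removed}, whereas (\ref{EveryB}) asserts that certain intersections must \emph{exist}. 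If an edge of $D$ met no interior arc, there would be no ``disk or bigon across which $F$ can be pushed''; the configuration is impossible outright, for a structural reason your proposal never invokes: since $D$ is alternating, each edge runs from an underpass to an overpass, so the collar of $\partial F=L$ along that edge begins below $Q$ and ends above $Q$, and hence must twist across $Q$ at some point of the edge (Figure~\ref{Twist}); that twist point is an endpoint of an interior arc. This is the one place in Lemma~\ref{properties} where the alternating hypothesis enters directly, and it is absent from your argument.

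Second, in (\ref{Length4}) you dismiss the word $BBS$ as one of ``the few remaining short candidates'' eliminated ``via a disk isotopy,'' but this is precisely the case that cannot be handled that way: it requires primeness of the diagram, which appears nowhere in your proposal. The paper's argument perturbs a $BBS$ curve so that it meets $L$ at most three times, hence exactly twice; a simple closed curve meeting a prime diagram transversally in two points bounds a crossing-free disk on one side, and this forces the curve's saddle to be adjacent to the edge the curve meets, contradicting (\ref{SaddleArc}). For a non-prime diagram the statement is false, so no isotopy or minimality argument alone can rule out $BBS$. Two smaller points: your proof of (\ref{Disks}) assumes $F$ is incompressible and boundary-incompressible, which is not a hypothesis of the lemma (it is stated for every minimal-complexity spanning surface); and in (\ref{Balance}) the correct reason is that each saddle contributes exactly one arc of $F\cap S_+$ on each side of the over-strand of its bubble, not that a curve ``enters and leaves the bubble''---a curve crossing a bubble enters and leaves on the same side of the strand.
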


\begin{proof}
We indicate the proofs for $F\cap S_+$ below. The same arguments apply to $F\cap S_-$.

For (\ref{Disks}), see Proposition~2.2(i) in \cite{Menasco1992}. For (\ref{SaddleTwice}) and (\ref{ArcTwice}), see Proposition~2.2(ii), and for (\ref{SaddleArc}) see Proposition~2.3.

The proof of (\ref{EveryB}) follows from the fact that $L$ is alternating and is the boundary of $F$. Hence, every edge is adjacent to an overpass and underpass, and a part of the surface near an edge changes from being in $B_+$ to being in $B_-$ somewhere between the two adjacent crossings. This gives rise to an interior arc meeting the edge.
Claim (\ref{Balance}) follows from the fact that each saddle of a surface results
in one intersection curve on each side of a crossing. For (\ref{NoEmpty}), if there is a component of $F\cap S_+$ with no saddles or punctures, take an
innermost such. It bounds a disk in $B_+$, so we can isotop $F$ to eliminate the curve of intersection with $S_+$ and reduce the complexity.
For (\ref{ConsecutiveB's}), note that each boundary arc in a curve contributes two successive $B$'s.
Claim (\ref{NotJustS}) is proved in Lemma 2 of \cite{Menasco1984} for closed surfaces. The proof for spanning surfaces is exactly the same.

\begin{figure} [htbp]
\centering
\includegraphics[scale=0.4]{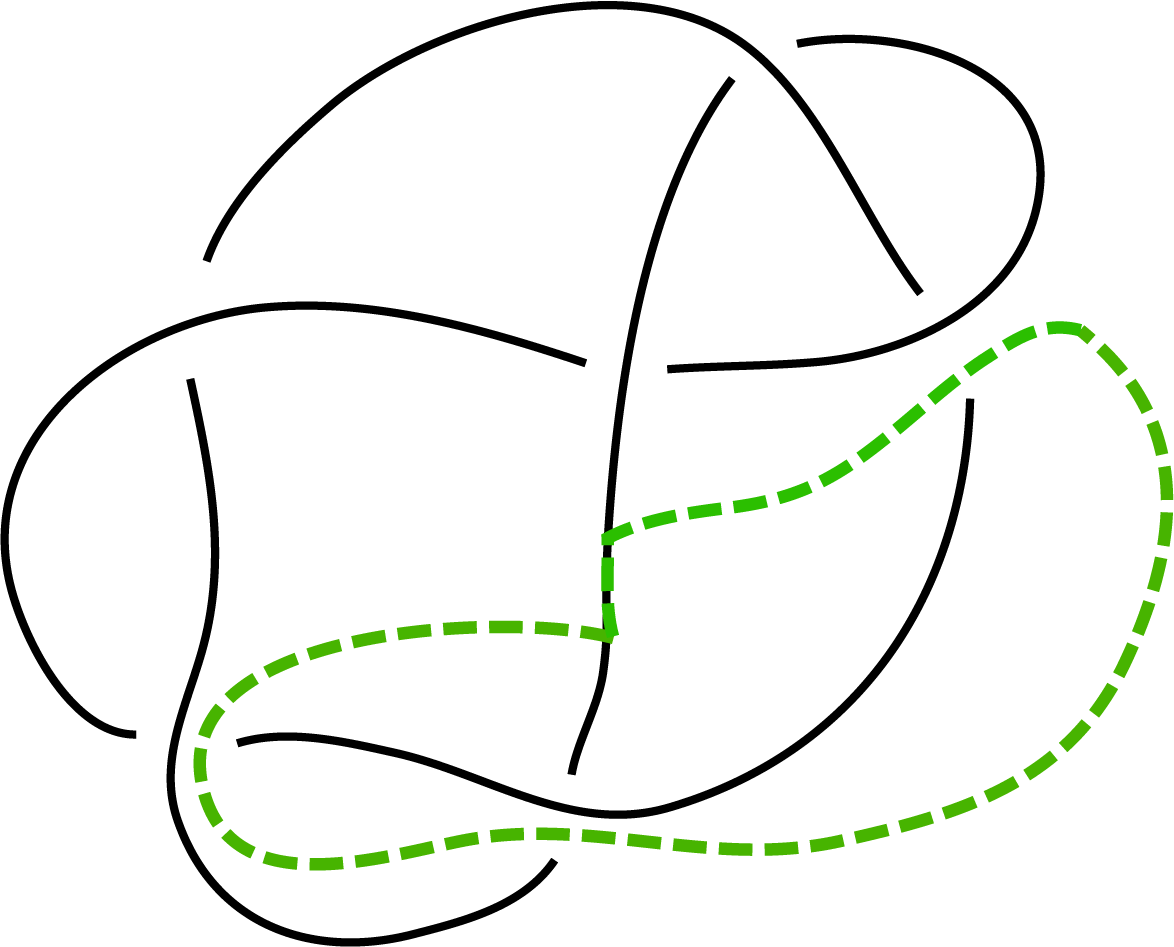}
\caption{The link $L$ and a $BBSSS$ curve from $F\cap S_+$}\label{Figure1.2}
\end{figure}

 For (\ref{Length4}), if the word for a curve has just two letters, it is one of $BB$, $SB$ or $SS$. The word $SS$ contradicts
(\ref{SaddleTwice}), $SB$ contradicts (\ref{ConsecutiveB's}), and $BB$ contradicts  (\ref{ArcTwice}). Among 3-letter words, we have ruled out $SSS$. A curve of type  $BBS$ can be perturbed so that it intersects the link $L$ at most three times.  Thus the perturbed curve intersects twice, implying either that $D$ is not prime or that Case (6)  is violated. Hence the length of the word is at least four.
\end{proof}

\section{Decomposing a surface into regions}

We henceforth consider a surface $F$ in standard position with  minimal complexity. In this section, we decompose the surface into polygonal faces, determined by the intersections of $F$ with $Q$, and analyze the contributions of the faces to the Euler characteristic of the surface. A similar technique was used in \cite{HTT} for closed surfaces, and  in \cite{Menasco1992} for a different purpose.

We decompose $F$ using the arcs of $F \cap Q$. These form a
graph on $F$. At each saddle four polygonal faces and four arcs of $F \cap Q$ meet at the  saddle point, where we add a vertex to the graph. We also add a vertex at every intersection of $D$ and an interior arc of $F \cap Q$. The resulting graph $\Gamma$ has vertices of valence four at the centers of the saddles and vertices of valence three at endpoints of interior arcs that meet $D$. The graph $\Gamma$ cuts $F$ into a collection of disks by Lemma \ref{properties} (\ref{Disks}) that we call polygonal faces or regions. The vertices and edges of a region are the vertices and edges of $\Gamma$ respectively.

 \begin{lemma}\label{Polygons}
The Euler characteristic of $F$ can be computed by adding the contribution of each region. A region $E$ contributes $1- {s_0}/{4}-{b_0}/{4}$ to $\chi(F)$, where
  $s_0$ is the number of $S$'s in the word associated to the boundary of $E$, and  $b_0$ is the number of $B$'s.
\end{lemma}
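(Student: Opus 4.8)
The plan is to observe that $\Gamma$ presents $F$ as a CW complex whose $2$-cells are exactly the regions, and then to compute $\chi(F)=V-E+R$ by distributing the vertex and edge counts over the regions so that each region absorbs precisely $1-s_0/4-b_0/4$. First I would record the combinatorial data of $\Gamma$. By Lemma~\ref{properties}(\ref{Disks}) the regions are disks, so $\Gamma$ is a genuine CW decomposition of $F$ and $\chi(F)=V-E+R$, where $R$ is the number of regions. The vertices come in exactly two types: the valence-four vertices at saddle centers, say $V_4$ of them, and the valence-three vertices where an interior arc meets $D$, say $V_3$ of them. Thus $V=V_4+V_3$, and summing valences over all vertices gives the handshake identity $4V_4+3V_3=2E$.

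Next I would identify the total $S$- and $B$-counts with corner counts at the two vertex types. Each letter $S$ in a region's word records that region turning a corner at a saddle; since a saddle is an \emph{interior} valence-four vertex with four corners, each belonging to a region and each traversed exactly once, summing over all regions yields $\sum_E s_0 = 4V_4$. Each letter $B$ records a transition between an interior arc and a boundary arc, which occurs precisely at a valence-three vertex. The subtle point here is that such a vertex lies on $\partial F$: its neighbourhood in $F$ is a half-disk, and the single interior edge splits it into only \emph{two} corners (not three), each contributing one $B$. Hence $\sum_E b_0 = 2V_3$. Combining, $\sum_E (s_0+b_0) = 4V_4 + 2V_3$, so by the handshake identity $\tfrac14\sum_E(s_0+b_0) = V_4 + \tfrac12 V_3 = E-V$.

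Finally I would assemble the sum:
\[
\sum_E\Bigl(1-\tfrac{s_0}{4}-\tfrac{b_0}{4}\Bigr)\;=\;R-\tfrac14\sum_E(s_0+b_0)\;=\;R-(E-V)\;=\;V-E+R\;=\;\chi(F),
\]
which is the assertion. I expect the main obstacle to be the bookkeeping of the second step, in particular justifying that a valence-three vertex contributes exactly two corners (because one side lies outside $F$), and that each corner at a saddle and at a boundary vertex contributes exactly one $S$ or one $B$ respectively, regardless of whether a single region happens to occupy several corners of the same vertex. Once these corner counts are pinned down, the conclusion follows purely from the handshake identity.
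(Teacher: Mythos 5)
Your proof is correct and is essentially the paper's argument: both compute $\chi(F)=V-E+R$ from the disk decomposition induced by $\Gamma$ (Lemma~\ref{properties}(\ref{Disks})) and apportion the vertex and edge counts among the regions, using exactly the same corner counts (four $S$-corners at each valence-four saddle vertex, two $B$-corners at each valence-three boundary vertex, matching the paper's $+1/4$ and $+1/2$ vertex allocations). The only difference is bookkeeping: the paper redistributes fractional charges locally so that every $S$- and $B$-corner nets exactly $-1/4$ within its own region, whereas you sum the same incidences globally via the handshake identity $4V_4+3V_3=2E$; the two computations are equivalent.
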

 \begin{proof}
 Enumerate all curves $C_i, i=1, \dots  r$, in $F \cap S_+$ and $F \cap S_-$. Suppose $C_i$ is the boundary of a region $E_i$ of $F$ with interior in
 $B_+$ or  $B_-$.
 The Euler characteristic of $F$ can be recovered by summing the contributions of each of these regions $E_i, i=1, \dots , r$.

 Four distinct regions share a vertex at a saddle, and two regions share a vertex at the endpoint of an interior arc that meets $D$. Two regions share an edge of $\Gamma$ that coincides with an interior arc, and there is just one region at every edge of $\Gamma$ that corresponds to a boundary arc. Hence the Euler characteristic  $\chi(F) = v-e+f$ can be  distributed among vertices and edges as follows.

 For vertices, $+1/4$ is  allocated to each
 vertex of a region on a saddle (\textit{i.e.} $S$ contributes +1/4 as a vertex), +1/2 to each vertex at the end of an interior arc that meets $D$ (\textit{i.e.} $B$ contributes +1/2 as a vertex). For edges,  the contribution is -1/2 for an edge of $\Gamma$ that corresponds to an interior arc, and -1 to an edge   that corresponds to a boundary arc.

 Now let's distribute the contributions of the edges of $\Gamma$ between their vertices. Every $B$ is an endpoint of one interior arc and one boundary arc. Every $S$ is an endpoint of two interior arcs. In an interior arc with the contribution of -1/2, we can view the contribution of $B$ as -1/4 and of $S$ as -1/4.  In a boundary arc with the contribution -1, the contribution of every $B$ is -1/2. Thus the contribution of a region with $s_0$ vertices at saddles and $b_0$ vertices at the ends of interior arcs that meet $D$ is
 $$
 s_0(1/4-1/4-1/4) +b_0(1/2-1/4-1/2) + 1 = 1 - s_0/4-b_0/4.
$$
  \end{proof}

\begin{remark}\label{Contribution}
It follows from Lemma \ref{properties} (\ref{Length4}) that all curves of $F \cap S_+$ or $F \cap S_-$ make a negative contribution to the Euler characteristic of a minimal complexity spanning surface, no greater than -1/4, except for $BBBB$ and $BBSS$ curves, which contribute zero. We analyze $BBBB$ curves in the next section.
\end{remark}

\section{Collections of $BBBB$ curves}

We say that a collection of  regions in $F$ is {\em connected} if the  dual graph,
formed by taking a vertex for each region and an edge for two regions that share a  common
 arc,  is connected. A  connected collection of $BBBB$ regions is {\em maximal}  if it is not a
strict subset of a connected collection of $BBBB$ regions.

\begin{figure}[H]
\centering
 \includegraphics[scale=0.7]{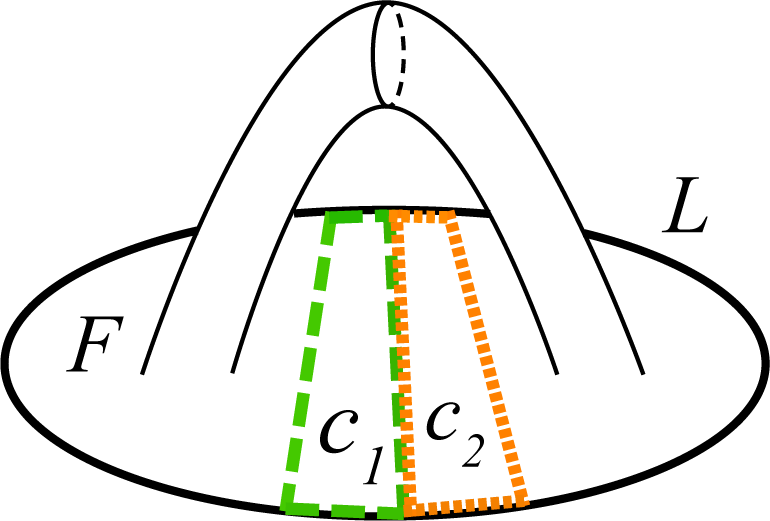}
\caption{A connected collection of two regions, each bounded by a $BBBB$ curve.}\label{Parallel}
 \end{figure}

 \begin{lemma}\label{Regions}
 Assume that $L$ is not a $(2,n)$-torus link.
 A connected collection of  $BBBB$ regions form a subsurface of $F$ with interior homeomorphic to a disk.
 The set of all maximal connected collections of $BBBB$ regions forms a collection of disks
 with no pair of disks sharing a common arc in their boundaries.
  \end{lemma}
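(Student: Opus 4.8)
The plan is to reduce the homeomorphism type of a connected collection to a graph-theoretic dichotomy and then to exclude the bad case using the hypothesis on $L$. First I would record the shape of a single $BBBB$ region. By Lemma~\ref{properties}(\ref{ConsecutiveB's}) its word is $BB\,BB$, with the two $B$-pairs separated by the two interior arcs, so the region $E$ is a quadrilateral disk lying in $B_+$ or $B_-$ whose boundary alternates between two boundary arcs on edges of $D$ and two interior arcs in $Q-D$. Boundary arcs lie on $\partial F$ and bound a unique region, so two regions of a collection can only be adjacent along a shared interior arc; moreover each interior arc lies on $Q$ and separates its region from exactly one region on the opposite side of $Q$. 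Consequently, in the dual graph of a connected collection every vertex has degree at most two (one per interior arc), and adjacent regions lie on opposite sides of $Q$. A connected graph of maximum degree two is either a path or a cycle, so only these two cases arise.

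In the path case the collection is built by attaching disks one at a time along a single arc, which produces a disk; this already gives the first assertion whenever no cycle occurs. It remains to rule out the cycle case, which is where the hypothesis that $L$ is not a $(2,n)$-torus link enters. In a cycle (including the degenerate length-two configuration in which two regions share both of their interior arcs) every interior arc of every region is internal, so the resulting subsurface $A$ has all of its boundary on $L$ and closes up to an annulus, or to a M\"obius band when the cyclic gluing is orientation-reversing. A direct Euler-characteristic count is consistent with this, since each $BBBB$ region contributes zero by Remark~\ref{Contribution}.

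The hard part is to show that such an $A$ cannot exist unless $L$ is a $(2,n)$-torus link, and the crux is verifying that $A$ is essential in the exterior of $L$. Here I would use that the surfaces under consideration are incompressible, so the core of $A$ does not compress, together with minimality of the complexity $(s,c)$, which should rule out $A$ being boundary-parallel: a parallelism would let us isotope $F$ across it and lower $c$. Passing to the orientation double cover in the M\"obius case, $A$ then yields an essential annulus in the exterior of $L$. By Menasco's dichotomy a prime alternating link is either hyperbolic or a $(2,n)$-torus link, and a hyperbolic exterior contains no essential annulus; hence the cycle case forces $L$ to be a $(2,n)$-torus link, contradicting the hypothesis. (Alternatively one can argue combinatorially: the interior arcs of the cycle record a cyclic sequence of crossings joined in pairs by the regions, exhibiting the diagram as a closed $2$-braid, i.e.\ a $(2,n)$-torus diagram.) Therefore every connected collection is a path and hence a disk, proving the first statement.

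Finally, for the statement about maximal collections the disjointness is immediate from maximality. A common arc shared by two maximal disks cannot be a boundary arc, since boundary arcs lie on $\partial F$ and bound a single region; it would therefore be an interior arc, carrying a $BBBB$ region on each of its two sides. If these two regions lay in different maximal collections they would be adjacent in the dual graph, hence in the same connected---and so the same maximal---collection, a contradiction. Thus distinct maximal disks share no arc in their boundaries, completing the proof.
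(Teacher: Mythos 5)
Your first half is sound and in fact streamlines the paper's argument: noting that a $BBBB$ region has exactly two interior arcs (its boundary arcs lie on $\partial F$ and belong to a unique region), so that the dual graph has maximum degree two and is a path or a cycle, replaces the paper's induction, which removes a region whose deletion keeps the dual graph connected and splits into the cases where $C_0\cap R$ has one or two components. Your maximality paragraph also agrees with the paper. The problem is the cycle case, which is the heart of the lemma, and there your argument has two genuine gaps. First, you derive incompressibility of the annulus or M\"obius band $A$ from the premise that ``the surfaces under consideration are incompressible.'' No such hypothesis is available: Lemma \ref{Regions} feeds into Lemma \ref{BS} and Theorem \ref{NumberOfSpanning}, which count \emph{all} genus-$g$ Seifert surfaces, including compressible ones (e.g.\ stabilized surfaces); the standing assumptions on $F$ are only standard position and minimal complexity. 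Second, your exclusion of boundary-parallelism --- ``a parallelism would let us isotope $F$ across it and lower $c$'' --- is asserted, not proved, and it is not clear it is even true: pushing $A$ across the product region, rel $\partial A\subset L$, produces a surface that still meets $Q$ in a collar of the relevant link components, with no evident drop in $(s,c)$. The parenthetical ``closed $2$-braid'' alternative has a similar hole: a priori the components of $\partial C$ could cross \emph{other} components of $L$ in the diagram, so the cycle alone does not exhibit all of $L$ as a $2$-braid closure.

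Both gaps can be repaired, but not by the reasons you give. In the annulus case the two boundary circles of $A$ are disjoint closed curves contained in $L$, hence distinct components of $L$ lying on distinct boundary tori, so $A$ can never be boundary-parallel; and if $A$ compressed, a longitude of a component of $L$ would bound a disk in the link exterior, contradicting non-splitness and $n>0$. In the M\"obius case one works with the frontier annulus of the twisted $I$-bundle: its compression again gives such a disk, and boundary-parallelism would make the exterior a solid torus, i.e.\ $L$ the unknot. The paper avoids all of this by never discussing essentiality of $A$: it takes the torus $T=\partial N(C)$, invokes Menasco's theorem \cite{Menasco1984} that such a torus in a prime non-split alternating link complement must be compressible, concludes that $T$ bounds a solid torus in $S^3-L$ (so that in fact $L=\partial C$), and reads off that $L$ is a $(2,n)$-torus link because each curve of $\partial C$ meets the meridian twice. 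That route needs nothing about incompressibility of $F$ and no boundary-parallelism argument, which is exactly why it works for arbitrary spanning surfaces; your route can be made to work only after the repairs above.
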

 \begin{proof}
We first use induction to show that a connected collection of  $BBBB$ regions $C$ has interior homeomorphic to a disk.
If $C$ consists of a single $BBBB$ region, then it is
a disk, since any single region is a disk.  Now consider the case when $C$ consists of the union of
$k$ distinct $BBBB$ regions.   The dual graph of $C$ is a finite connected graph, so we can remove some region $R$
to get a collection of $k-1$ $BBBB$ regions $C_0$ which is also connected.  By induction $C_0$ is
a disk, and $C$ is obtained from $C_0$ by adding a single $BBBB$ region $R$ sharing at least  one arc with $C_0$.  Now
$C_0 \cap R$ consists of either one or two interior $BB$ arcs.  If one, then since the union
of two disks intersecting along a proper arc on their boundary is a disk, it follows that the interior of $C$ is also a disk.
If $C_0 \cap R$ has two components, then $C$ is an annulus or Mobius band, properly embedded in $F$, and $\partial C$  coincides with
either one or two components of $L$.  The boundary of a regular neighborhood in $S^3$ of this annulus
or Mobius band is a torus,
and since $L$ is alternating, the torus must be compressible in the complement of $L$ \cite{Menasco1984}, and hence unknotted
and the boundary of a solid torus in the complement of $L$.
Since each curve of $\partial C$  is isotopic to a curve on this torus  that intersects its meridian twice, and $L$ is not split, $L$ must be a $(2,n)$-torus link, contradicting our hypothesis.
We conclude that the
connected collection of  $BBBB$ regions $C$  is homeomorphic to a disk.

A maximal connected collection of $BBBB$ regions cannot share an interior arc with
a another such region, since if it did neither would be maximal.  Thus maximal connected collections
are disks with no common arc in their boundaries.
 \end{proof}

A consequence of Lemma~\ref{Regions} is that the isotopy class of a spanning surface does not depend at all on the location of
$BBBB$ curves.

\begin{lemma}\label{BS}
The curves  of $F\cap{S_+}$ and $F\cap{S_-}$ that are not of type $BBBB$, together with the link $L$, determine a unique spanning surface $F$, up to isotopy.
\end{lemma}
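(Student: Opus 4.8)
The plan is to reconstruct $F$ piece by piece from the prescribed data, checking that each piece, and the way the pieces are glued, is determined up to isotopy. Throughout I would compare two minimal-complexity standard-position spanning surfaces $F$ and $F'$ whose non-$BBBB$ curves on $S_+\cup S_-$ agree and whose common boundary is $L$, and aim to show that $F$ is isotopic to $F'$. The first step is to locate the boundaries of the $BBBB$ pieces. By Lemma~\ref{Regions} the maximal connected collections of $BBBB$ regions are disjoint embedded disks $\Delta_1,\dots,\Delta_m$, no two of which share a boundary arc. Consequently each interior arc of $\partial\Delta_j$ is shared with a non-$BBBB$ region and so lies on a non-$BBBB$ curve, while each boundary arc of $\partial\Delta_j$ lies on $L$. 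Hence $\partial\Delta_j\subset L\cup(\text{non-}BBBB\text{ curves})$, so the circles $\{\partial\Delta_j\}$, together with the pattern by which all regions are glued along the interior and boundary arcs dictated by the curve words, are completely determined by the given data.

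Next I would treat the non-$BBBB$ regions. Each such region is a disk (Lemma~\ref{properties}(\ref{Disks})) bounded by one of the prescribed non-$BBBB$ curves $C$ and lying entirely in a single ball $B_+$ or $B_-$. Since $L\subset S_+\cup S_-=\partial B_+\cup\partial B_-$, the interior of each ball is disjoint from $L$, and any two embedded disks in a ball with the same boundary curve on the bounding sphere are isotopic rel boundary by irreducibility of the ball. Thus the region of $F$ bounded by $C$ is isotopic, rel $C$ and fixing $L$, to the region of $F'$ bounded by $C$.

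I would then fill the $BBBB$ disks. Working in the complement $M=S^3\setminus L$, which is irreducible since $L$ is a non-split prime alternating (non-torus) link, each $\Delta_j$ is an embedded disk with the boundary found above, and two embedded disks in an irreducible manifold with a common boundary cobound a ball and are isotopic rel boundary. This is precisely the geometric content of the remark following Lemma~\ref{Regions}, that the locations of the $BBBB$ curves inside such a disk are irrelevant up to isotopy. Assembling the isotopies of the individual non-$BBBB$ regions and of the $BBBB$ disks along the prescribed gluing arcs then carries $F$ to $F'$, giving the claimed uniqueness.

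The step I expect to be the main obstacle is this last disk-uniqueness claim for the $BBBB$ pieces. Unlike a non-$BBBB$ region, a disk $\Delta_j$ need not lie in a single ball: it is built from $B_+$- and $B_-$-regions glued along interior arcs on the plane $Q$, so it crosses $Q$, and genuine use of the irreducibility of the link complement, rather than a purely combinatorial comparison, is needed to rule out knotting of the filling. Care is also required at the arcs of $\partial\Delta_j$ lying on $L$, so that the ball realizing the isotopy meets $L$ only in those prescribed arcs and the whole isotopy can be performed within the link complement.
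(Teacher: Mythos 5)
Your proposal takes essentially the same route as the paper: use Lemma~\ref{Regions} to see that the maximal connected collections of $BBBB$ regions are disjoint disks whose boundaries lie on $L$ and the non-$BBBB$ curves, observe that the part of the surface outside these disks is pinned down by the non-$BBBB$ curves, and then invoke irreducibility of the link complement to fill in each disk uniquely up to isotopy rel boundary; your disk-in-a-ball argument for the non-$BBBB$ regions simply makes explicit what the paper asserts in one sentence. The one genuine omission is the $(2,n)$-torus link case: the lemma as stated applies to all prime non-split alternating links, but Lemma~\ref{Regions}, on which your first step relies, excludes $(2,n)$-torus links, and your parenthetical ``non-torus'' quietly builds that exclusion into your hypotheses rather than dealing with it. The paper closes this case with a single sentence --- a $(2,n)$-torus link has a unique incompressible spanning surface, so the conclusion is trivial there --- and your proof needs that sentence (or some substitute) to cover the lemma as stated. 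Note also that irreducibility of the complement requires only that $L$ be non-split, not that it be non-torus, so the exclusion logically belongs with the application of Lemma~\ref{Regions}, not with the irreducibility step where you placed it.
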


\begin{proof}
  By Lemma \ref{Regions}, maximal connected collections of $BBBB$  regions form disjoint disks when $L$ is not a $(2,n)$-torus link.
  The image of the surface in the complement of the disks
is determined by the configuration of curves  that are not $BBBB$ curves.
Two Seifert surfaces that agree in the complement of a collection of disks are isotopic, since the link $L$ is not split, and therefore has
irreducible complement.  Thus the  isotopy class of the spanning surface $F$ is determined once the curves that are not $BBBB$
are specified.  Finally, we note that a $(2,n)$-torus link has a unique incompressible spanning surface.
\end{proof}

\section{The number of Seifert surfaces of fixed genus}

In this section, we bound the number of curves in $F\cap{S_+}$ and $F\cap{S_-}$, and the maximum length of the word associated to each curve. This in turn gives an upper bound for the number of Seifert surfaces of a fixed genus, up to isotopy. Here and further we assume that genus $g>0$.

Let $C_1$ denote the set of curves of $F\cap{S_+}$ and $F\cap{S_-}$ that are not of type $BBBB$  or $BBSS$,
and $C_2$   the set of  $BBSS$ curves,

\begin{lemma}\label{C1} $|C_1| \le 8g -4$.
\end{lemma}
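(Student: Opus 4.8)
The plan is to read the bound directly off the Euler characteristic computation of Lemma~\ref{Polygons}, combined with the per-curve contribution estimates recorded in Remark~\ref{Contribution}. Recall that each closed curve $C$ of $F\cap S_+$ or $F\cap S_-$ bounds exactly one region of the decomposition, so summing the contributions of all regions gives
\begin{equation*}
\chi(F)\;=\;\sum_{C}\Bigl(1-\tfrac{s_0(C)}{4}-\tfrac{b_0(C)}{4}\Bigr),
\end{equation*}
where the sum ranges over all such curves and $s_0(C),\,b_0(C)$ count the $S$'s and $B$'s in the word associated to $C$.

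Next I would partition the curves into three classes: the $BBBB$ curves, the $BBSS$ curves (the set $C_2$), and all remaining curves (the set $C_1$). By Remark~\ref{Contribution} the first two classes each contribute $0$, while every curve in $C_1$ contributes at most $-1/4$. I would justify this last point by noting that the contributions are quantized: a word of length $\ell$ contributes $1-\ell/4$, and the only words of length $4$ are $BBBB$, $BBSS$ and $SSSS$, the last of which is excluded by Lemma~\ref{properties}(\ref{NotJustS}). Hence every curve of $C_1$ has word length at least $5$ and contributes at most $1-5/4=-1/4$. Discarding the vanishing terms then yields
\begin{equation*}
\chi(F)\;=\;\sum_{C\in C_1}\Bigl(1-\tfrac{s_0(C)}{4}-\tfrac{b_0(C)}{4}\Bigr)\;\le\;-\tfrac{1}{4}\,|C_1|.
\end{equation*}

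Finally I would substitute the Euler characteristic of the surface. Since $F$ is a genus-$g$ Seifert surface, $\chi(F)=1-2g$, and rearranging the displayed inequality gives $|C_1|\le -4\chi(F)=8g-4$, as claimed.

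There is essentially no hard geometric step here: all of the real work has been front-loaded into Lemmas~\ref{properties} and~\ref{Polygons} and Remark~\ref{Contribution}, so the proof is little more than a counting argument. The one point that demands care is confirming that no curve in $C_1$ contributes strictly between $-1/4$ and $0$, i.e.\ that the contributions really are quarter-integer valued and that the two exceptional length-$4$ words are precisely the ones excluded from $C_1$; the other place to be attentive is pinning down $\chi(F)$ in terms of $g$, which is what converts the contribution inequality into the stated numerical bound.
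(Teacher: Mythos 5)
Your proof is correct and takes essentially the same route as the paper's: both read off the bound by combining Remark~\ref{Contribution} (every curve outside $C_1$ contributes $0$, every curve in $C_1$ contributes at most $-1/4$) with $\chi(F)=1-2g$ from Lemma~\ref{Polygons}, yielding $|C_1|\le -4\chi(F)=8g-4$. One small remark: your side-justification that ``the only words of length $4$ are $BBBB$, $BBSS$ and $SSSS$'' also requires Lemma~\ref{properties}(\ref{ConsecutiveB's}) to exclude $BBBS$, $BSBS$ and $BSSS$, but since you invoke Remark~\ref{Contribution} anyway, which already packages this, the argument stands.
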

\begin{proof}
 By Remark \ref{Contribution},  every curve in $C_1$ gives a contribution to the absolute value of the Euler characteristic of $F$ of at least 1/4. The Euler characteristic of $F$ is $1-2g$.
Thus the maximal possible size of $|C_1|$ is  $1-2g/(-1/4) = 8g-4$.
\end{proof}

\begin{lemma}\label{WordsLength} The length of the word associated to any  curve in $(F\cap S_\pm)$ is at most $8g-4$.
\end{lemma}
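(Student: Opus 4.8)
The plan is to read the bound directly off the Euler-characteristic decomposition of Lemma~\ref{Polygons}. Each curve $C$ of $F\cap S_{\pm}$ bounds a single region, and if its word has $s_0$ letters $S$ and $b_0$ letters $B$, then that region contributes $1-\tfrac{s_0+b_0}{4}=1-\tfrac{\ell}{4}$ to $\chi(F)$, where $\ell=s_0+b_0$ is the word length. By Lemma~\ref{properties}(\ref{Length4}) every word has length at least four, so \emph{every} region makes a contribution that is at most $0$, with equality exactly for the $BBBB$ and $BBSS$ curves (Remark~\ref{Contribution}). This is the mechanism I would exploit: a long word forces a large negative contribution, and the total negative contribution available is fixed by the genus.

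First I would record the global identity obtained by summing the contributions of all regions. Since $\chi(F)=1-2g$, summing over all curves $C_i$ gives $\sum_i\bigl(1-\tfrac{\ell_i}{4}\bigr)=1-2g$, which rearranges to $\sum_i(\ell_i-4)=8g-4$. Each summand $\ell_i-4$ is nonnegative, so the ``excess'' $\ell_j-4$ of any one word is bounded by the total excess $8g-4$. If the curve under consideration is of type $BBBB$ or $BBSS$ there is nothing to prove, since then $\ell=4\le 8g-4$ for $g\ge 1$; otherwise, by Remark~\ref{Contribution} the curve lies in the class $C_1$ of Lemma~\ref{C1}, and the entire excess $8g-4$ is in fact supported on the curves of $C_1$.

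The delicate point — and the step I expect to be the main obstacle — is pinning down the exact constant $8g-4$ rather than the cruder $8g$ that the excess identity gives on its own. The difficulty is that companion regions may be of type $BBSS$ and contribute zero, so one cannot simply subtract a positive companion excess. To close the gap I would argue that the deficit cannot be concentrated in a single word: because $g>0$ the region bounded by the chosen curve $C_j$ is a proper subsurface of $F$, and by the balance property (Lemma~\ref{properties}(\ref{Balance})) together with Lemma~\ref{properties}(\ref{SaddleTwice}) each saddle traversed by $C_j$ is shared with a distinct companion curve on the same sphere, while the boundary $L$ forces further saddle and boundary structure on the opposite side of $Q$. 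Quantifying this to show that the regions other than $E_j$ together contribute at least one full unit to $\chi(F)$ — equivalently $\ell_j-4\le 8g-8$ — is precisely where the work lies, and combining that estimate with the excess identity $\sum_i(\ell_i-4)=8g-4$ yields the stated bound $\ell_j\le 8g-4$.
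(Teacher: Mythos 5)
Your mechanism is exactly the paper's: the published proof of Lemma~\ref{WordsLength} consists of precisely the first part of your proposal --- each curve's region contributes $1-\ell/4$ to $\chi(F)=1-2g$ by Lemma~\ref{Polygons}, every contribution is nonpositive by Lemma~\ref{properties}~(\ref{Length4}), and curves outside $C_1$ have length exactly $4$. The interesting point is the step you flagged as the ``delicate point.'' Carried out honestly, as you do, this bookkeeping yields only $\ell_j-4\le\sum_i(\ell_i-4)=8g-4$, i.e.\ $\ell_j\le 8g$. The paper does not supply the missing argument: it reads off $b_0+s_0=8g-4$ directly from the same computation, in effect bounding $(b_0+s_0)/4$ by $2g-1$ rather than bounding $(b_0+s_0)/4-1$ by $2g-1$. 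This mirrors the division $(2g-1)/(1/4)=8g-4$ that is legitimate in Lemma~\ref{C1} (where each curve is weighted by at least $1/4$), but applied to word length it drops the $+1$ from each region's contribution and so is off by four. In other words, the obstacle you identified is a real looseness in the paper itself, not an idea you failed to find; your proposal is, at this point, more careful than the source.

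Accordingly, do not expect your sketched repair to be recoverable from the paper, and note that it is not routine: the balance property (Lemma~\ref{properties}~(\ref{Balance})) and the fact that saddles and twist points are shared between $F\cap S_+$ and $F\cap S_-$ do not by themselves force any companion region to contribute strictly negatively --- at the level of counting, a single long word can be balanced entirely against $BBBB$ and $BBSS$ curves, so excluding $\ell_j=8g$ (if it can be excluded) would require a genuinely topological argument, which neither you nor the paper gives. The saving grace is that the constant is immaterial downstream: with the bound $\ell\le 8g$ in place of $8g-4$, the proof of Theorem~\ref{NumberOfSpanning} goes through verbatim with exponent $(8g-4)(8g)+4(4g-4)=64g^2-16g-16$ instead of $64g^2-48g$, which is still polynomial in $n$ for fixed $g$. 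So: same approach as the paper, rigorous up to the constant $8g$, and the one step you leave open is a step the paper asserts without proof.
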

\begin{proof} We consider words  associated to the curves  in $C_1$, since all other words are of length 4.
It follows from Lemma \ref{Polygons} that each word of $C_1$ gives a negative contribution to the Euler characteristic of $F$ .
The Euler characteristic of $F$ is $1-2g$, and each word contributes $1- {s_0}/{4}-{b_0}/{4}$ by Lemma \ref{Polygons},
where $b_0$ is the number of $B$'s, and $s_0$ is the number of $S$'s in the word.
Thus the longest possible word has length $b_0+s_0=8g -4$.
\end{proof}

\begin{lemma}\label{C2}
$|C_2| \le 4g-4 $.
\end{lemma}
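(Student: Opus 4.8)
The plan is to adapt the Euler characteristic accounting of Lemma~\ref{C1}, but the difficulty special to $C_2$ is that, by Remark~\ref{Contribution}, a $BBSS$ curve contributes exactly zero to $\chi(F)$ and so is invisible to that count; such a curve must instead be controlled through the way its region is glued into $F$. I would first record the local picture of a $BBSS$ region. By Lemma~\ref{properties}(\ref{ConsecutiveB's}) its two $B$'s bound a single boundary arc lying on one arc of $D$, while the rest of its boundary consists of three interior arcs running to its two saddles; Lemma~\ref{properties}(\ref{SaddleArc}) forces those saddles to sit at crossings not adjacent to that arc, and Lemma~\ref{properties}(\ref{ArcTwice}) forces the two interior arcs meeting $D$ to leave on opposite sides, so the region straddles its boundary arc. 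As in the proof of Lemma~\ref{Polygons}, each interior arc separates a region of $F\cap B_+$ from one of $F\cap B_-$, so adjacency of regions across interior arcs is bipartite.

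With this in hand I would study maximal connected collections of $BBSS$ regions, in direct analogy with the treatment of $BBBB$ regions in Lemma~\ref{Regions}. The same inductive argument should show that such a collection is a subsurface whose interior is a disk or a product band, with the annulus and M\"obius alternatives ruled out exactly as there: a closed-up band would be the core of a solid torus in the complement of $L$, forcing $L$ to be a $(2,n)$-torus link or to be split, against hypothesis. This reduces the problem to bounding how many $BBSS$ regions these bands can contain in total.

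The crux is to convert the combinatorial ``length'' of these bands into Euler characteristic. The plan is to show that a band of $BBSS$ regions can neither close up on itself nor run off into $\partial F$, so that each of its two ends must abut a curve of $C_1$ across a shared saddle or interior arc; combined with the balance property Lemma~\ref{properties}(\ref{Balance}), this ties the $BBSS$ count to the bounded set $C_1$ and to the $2g-1$ units of $|\chi(F)|$, and the bookkeeping is arranged to yield $|C_2|\le 4g-4$. I expect the genuine obstacle to be precisely the zero Euler characteristic degeneracy flagged in the introduction: because a single $BBSS$ region costs nothing, the naive count permits arbitrarily long bands, and excluding them --- so that the bound depends on $g$ alone and not on $n$ --- requires the topological input (incompressibility of $F$ at minimal complexity, together with the non-split, prime, and non-torus hypotheses on $L$) rather than the Euler characteristic ledger alone. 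Showing that a long band would either admit a complexity-reducing isotopy or produce an essential annulus, contradicting minimality, is the step I anticipate will require the most care.
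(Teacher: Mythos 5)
There is a genuine gap here, and you have located it yourself: your whole plan rests on the claim that long bands of $BBSS$ regions can be excluded ``by a complexity-reducing isotopy or an essential annulus,'' and that step is never carried out --- it is flagged as the part requiring the most care, but it is in fact the entire content of the lemma. Worse, even if the band-structure analysis were completed, attaching the two ends of each band to curves of $C_1$ would only bound the \emph{number of bands}, not the number of $BBSS$ regions inside a band, and $|C_2|$ counts regions, not bands; so the proposal as structured cannot yield the inequality without the missing length bound. Finally, that missing bound is essentially a theorem of Menasco and Thistlethwaite in disguise, and re-deriving it from the prime/non-split/non-torus hypotheses via isotopy and annulus arguments would amount to redoing a substantial part of \cite{Menasco1992}.

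The paper's actual proof is a short incidence count that bypasses bands entirely. Theorem 3 of \cite{Menasco1992} bounds the total number of saddles of a minimal-complexity surface by $-\chi(F)-b = 2g-2$, where $b$ is the number of boundary components of $F$. This is precisely the ``topological input'' you correctly sensed was necessary: it is where minimality of the complexity pair $(s,c)$ --- whose first entry is the saddle count --- enters, and it is imported from the literature rather than re-proved. Each $BBSS$ curve passes through two distinct saddles (distinct by Lemma \ref{properties}(\ref{SaddleTwice})), and each saddle is met by at most four intersection curves, so counting (curve, saddle) incidence pairs gives $2|C_2| \le 4(2g-2)$, i.e. $|C_2| \le 4g-4$. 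Your instinct that the Euler-characteristic ledger of Lemma \ref{Polygons} is blind to zero-contribution $BBSS$ curves is right; but the remedy is not a new topological argument about maximal collections --- it is the already-available saddle bound, which converts the genus into a cap on saddles and hence, by pure counting, into a cap on the curves that use them.
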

\begin{proof}
  Every $BBSS$ curve contains two saddles. Theorem 3 of \cite{Menasco1992} gives an upper bound of
 $  (-\chi(F)-  b)  =  2g-2 $  for the number of saddles in a minimal complexity diagram, where $b$ is the number of
 boundary components of $F$.
Each   $BBSS$ curve meets two saddles and each saddle meets four curves, thus
at most  $ 4g-4 $ $BBSS$ words are associated to the surface $F$. So $|C_2| \le   2( 2g-2)  = 4g-4 $.
\end{proof}

\begin{theorem}\label{NumberOfSpanning}
For a prime non-split alternating link $L$ with $n$ crossings, the number of isotopy classes of genus-$g$ Seifert surfaces is at most $ (4n)^{  64g^2 -48g}$ .
\end{theorem}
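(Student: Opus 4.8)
The plan is to count a surface by counting the combinatorial data that pins it down, namely the collection of curves of $F\cap S_+$ and $F\cap S_-$ that are \emph{not} of type $BBBB$. By Lemma~\ref{BS} this data, together with $L$, already determines $F$ up to isotopy, so it suffices to bound the number of such collections. I would therefore spend no effort on the $BBBB$ curves and concentrate entirely on the curves in $C_1\cup C_2$, reconstructing each one letter by letter from its associated word.

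Next I would bound the total number of letters appearing in the words of all non-$BBBB$ curves. Lemma~\ref{C1} gives $|C_1|\le 8g-4$ and Lemma~\ref{WordsLength} bounds each associated word by $8g-4$ letters, so the curves in $C_1$ contribute at most $(8g-4)^2$ letters. The curves in $C_2$ are exactly the $BBSS$ curves, each of length $4$, and Lemma~\ref{C2} gives $|C_2|\le 4g-4$, contributing at most $4(4g-4)$ letters. Adding these,
\[
(8g-4)^2 + 4(4g-4) = 64g^2 - 48g,
\]
bounds the total number of letters I must place on the diagram.

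Then I would observe that placing a single letter costs at most $4n$ choices: an $S$ is a saddle sitting at one of the $n$ crossings, while a $B$ is an endpoint of a boundary arc running along one of the $2n$ edges of $D$, and the slack remaining in the factor $4n$ absorbs the choice of $S_+$ versus $S_-$ together with the bounded amount of combinatorial data (depending only on $g$) recording how the letters are grouped into cyclic words and in what order. Since fixing the position of every letter determines each non-$BBBB$ curve up to isotopy on its sphere---the arcs joining consecutive letters lie in complementary disk regions of $D$ and so are determined up to isotopy by their endpoints---the whole collection, and hence $F$, is determined by Lemma~\ref{BS}. Multiplying the factor $4n$ over the at most $64g^2-48g$ letters yields the claimed bound $(4n)^{64g^2-48g}$.

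The conceptual difficulty has already been dispatched by the earlier lemmas: the $BBBB$ and $BBSS$ curves contribute nothing to $\chi(F)$, so the Euler characteristic alone cannot control them, and it is Lemma~\ref{BS} (eliminating $BBBB$) together with the saddle count behind Lemma~\ref{C2} (controlling $BBSS$) that prevent the exponential blow-up flagged in the introduction. What remains for the theorem itself is mainly bookkeeping, and the two points I would check carefully are that the arithmetic lands exactly on $64g^2-48g$ via the identity $(8g-4)^2+4(4g-4)=64g^2-48g$, and that a curve really is recovered from the placement of its letters rather than requiring extra routing data beyond the $4n$ allotted per letter.
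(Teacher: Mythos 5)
Your proposal is correct and follows essentially the same route as the paper's own proof: both reduce the count to the non-$BBBB$ curves via Lemma~\ref{BS}, bound the number of letters using Lemmas~\ref{C1}, \ref{C2} and \ref{WordsLength}, and charge at most $4n$ choices to each letter, so that the $(8g-4)^2 + 4(4g-4) = 64g^2-48g$ letters yield the bound $(4n)^{64g^2-48g}$. The only difference is bookkeeping: you pool all letters and absorb the grouping and $S_+$/$S_-$ data into per-letter slack, whereas the paper counts per curve (at most $(4n)^{8g-4}$ choices for each of at most $8g-4$ curves in $C_1$, and $(4n)^4$ for each curve in $C_2$), which is the same count organized in a different order.
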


\begin{proof}
 In each  isotopy class we choose a surface that is in minimizing standard position.
There are at most 8g -4 curves in $C_1 $   by Lemma~\ref{C1} and $4g-4$ curves in $C_2$   by Lemma ~\ref{C2}.
We count the number of possible configurations these curves can realize, up to isotopy of the curves.

Each of the  curves  in $C_1 $ has an associated word with length at most $8g-4$ by Lemma \ref{WordsLength}.
Consider a curve of $F \cap S_+$. The link $L$ has $n$ crossings, and each crossing gives rise to two choices for the location of an $S$
adjacent to that crossing. There are $2n$ edges in the link diagram, each edge having two sides,
giving  $4n$ choices for where an interior arc meets an edge.
Choosing  successive sides of  saddles and  edges determines a curve up to isotopy.
Therefore the number of isotopy classes of a curve in $C_1$, each such curve having length at most $8g-4$ is less than $(4n)^{8g-4}$.
For the entire collection of up to  $ 8g -4$ curves, the total number of isotopy classes is bounded by $(4n)^{{(8g-4)(8g-4)}}$.
Similarly, the number of configurations for curves in $C_2$ is bounded by $(4n)^{{(4)(4g-4)}}$.
Once the curves in $C_1$ and $C_2$  are fixed,  Lemma \ref{BS} shows that the spanning surface is determined up
to isotopy. This gives an upper bound of  $ (4n)^{  64g^2 -48g }$  possible  isotopy classes for $F$.
\end{proof}

\begin{remark}
We have not used the orientability of $F$ in the arguments above, except when we replace Euler characteristic
with genus. Therefore, a similar upper bound holds for the number of spanning surfaces, oriented or not, if genus $g$ is replaced by $(1-\chi(F))/2$.
\end{remark}

\begin{remark} The bound in Theorem \ref{NumberOfSpanning} is polynomial in $n$ when the genus is fixed.
Thus the number of genus ten Seifert surfaces for a link is bounded by a polynomial function of the number of crossings as $L$ varies.
However for a fixed link $L$,  the number of surfaces can grow exponentially with the genus.
This can be compared with the results on immersed closed surfaces in closed hyperbolic 3-manifolds
\cite{KahnMarkovic}, \cite{Masters}, and with similar results for closed surfaces in alternating links \cite{HTT}.
We note that J. Banks constructed explicit families of prime alternating knots for which the number of minimal genus Seifert surfaces  grows
 exponentially with the genus  (\cite{Banks}).
\end{remark}

\begin{remark}
The Kakimizu complex of a link $L$ is a simplicial complex whose vertices correspond to isotopy classes of minimal genus Seifert surfaces of $L$, and edges correspond to disjoint (up to isotopy) pairs of such surfaces (\cite{Kakimizu}).  Theorem \ref{NumberOfSpanning} gives an upper bound on the number of vertices in a connected component of this complex.
\end{remark}

 \section{Acknowledgments}
 J. Hass was partially supported by NSF grant DMS-1758107.
A. Thompson was partially supported by NSF grant DMS-1664587.
A. Tsvietkova was partially supported by NSF grants DMS-1406588, DMS-1664425, and by Okinawa Institute of Science and Technology funding.

Joel Hass \\
Department of Mathematics\\
University of California, Davis \\
One Shields Ave, Davis, CA 95616 \\
hass@math.ucdavis.edu

Abigail Thompson \\
Department of Mathematics\\
University of California, Davis \\
One Shields Ave, Davis, CA 95616 \\
thompson@math.ucdavis.edu

Anastasiia Tsvietkova\\
Department of Mathematics and Computer Science\\
Rutgers University-Newark \\
101 Warren Street, Newark, NJ  07102\\
a.tsviet@rutgers.edu


\begin{thebibliography}{90}


\bibitem{Banks} J. E. Banks, \textit{Knots with many minimal genus Seifert surfaces}, https://arxiv.org/abs/1308.3218

    \bibitem{Eisner} J. R. Eisner, \textit{Knots with infinitely many
minimal spanning surfaces}, Trans. Amer. Math. Soc. 229 (1977), 329--349.


\bibitem{FP} F. Frankl, L. Pontrjagin, \textit{Ein Knotensatz mit Anwendung auf die Dimensionstheorie}, Math. Annalen. 102 (1930), no. 1, 785--789.

 \bibitem{Greene} J. E. Greene, \textit{Alternating links and definite surfaces}, with an appendix by A. Juhász and M. Lackenby, Duke Math. J. 166 (2017), no. 11, 2133--2151

\bibitem{Haken:61}
W. Haken,
 \textit{Theorie der Normalfl\"achen:
Ein Isotopiekriterium f\"ur den Kreisknoten'}, {\em Acta Math.}, 105 (1961)
245--375.

\bibitem{Hass} J. Hass, \textit{Acylindrical surfaces in 3-manifolds}, Michigan Math. J. 42 (1995), no. 2, 357--365.

\bibitem{HassLagariasPippenger} J. Hass, J. C. Lagarias and N. Pippenger, \textit{The computational complexity of Knot and Link problems},  Journal of the ACM, 46 (1999) 185--211.
 \bibitem{HTT}  J. Hass, A. Thompson, A. Tsvietkova, \textit{The number of surfaces of fixed genus in an alternating link diagram}, Int. Math. Res. Not. (2017), no. 6, 1611--1622.

\bibitem{Hempel} J. Hempel, \textit{3-Manifolds}, Princeton University Press.

\bibitem{Howie} J. Howie, \textit{A characterisation of alternating knot exteriors},
Geom. Topol. 21 (2017), no. 4, 2353--2371




\bibitem{Kakimizu} O. Kakimizu, \textit{Finding disjoint incompressible spanning surfaces for a link}, Hiroshima
Math. J. 22 (1992), no. 2, 225--236.

\bibitem{KahnMarkovic} J. Kahn, V. Markovic, \textit{Counting essential surfaces in a closed hyperbolic three-manifold}, Geom. Topol. 16 (2012), 601--624.

\bibitem{Kneser:29}
H. Kneser,
``Geschlossene Fl\"achen in dreidimensionalen Mannigfaltigkeiten'',
{\em Jahresbericht Math. Verein.}, 28 (1929) 248--260.

\bibitem{Masters} J. D. Masters, \textit{Counting immersed surfaces in hyperbolic 3-manifolds},
Algebr. Geom. Topol. 5 (2005), 835--864.

\bibitem{Menasco1984} W. W. Menasco, \textit{Closed incompressible surfaces in alternating knot and link complements}, Topology {\bf 23} (1984), 37--44.

\bibitem{Menasco1985} W. Menasco, \textit{Determining incompressibility of surfaces in alternating knot and link complements}, Pacific J. Math. 117 (1985), no. 2, 353--370.

\bibitem{Menasco1992} W. M. Menasco, M. B. Thistlethwaite, \textit{Surfaces with boundary in alternating knot exteriors}, J. Reine Angew. Math. 426 (1992), 47--65.

\bibitem{Seifert} H. Seifert, \textit{\"Uber das Geschlecht von Knoten}, Math. Annalen. 110 (1934), no. 1, 571--592.

 \bibitem{Schoen} R. Schoen, \textit{Estimates for Stable Minimal Surfaces in Three Dimensional Manifolds} 103 of Ann. of
Math. Studies., Princeton University Press, 1983.


\bibitem{Thurston} W. P. Thurston, \textit{The geometry and Topology of Three-Manifolds}, Electronic Version 1.1 (March 2002), http://www.msri.org/publications/books/gt3m/

\end{thebibliography}
\end{document}